\theoremstyle{plain}
\newtheorem{thm}{\protect\theoremname}
\newtheorem{lem}{Lemma}
\theoremstyle{definition}
\newtheorem{rem}{Remark}
\newcommand\mbf[1]{\mathbf{#1}} 
\newcommand{\tr}{\mathrm{tr}}
\providecommand{\theoremname}{Theorem}
\author{Joseph Wells, Mary Cook, Karleigh Pine, Benjamin D. Robinson}
\begin{document}

\title{Fisher-Rao distance on the covariance cone}

\date{}

\maketitle

\begin{abstract}
The Fisher-Rao geodesic distance on the statistical manifold consisting of zero-mean p-dimensional multivariate Gaussians appears without proof in several places (such as Steven Smith's ``Covariance, Subspace, and Intrinsic Cramer-Rao Bounds''). In this paper, we give a proof using basic Riemannian geometry.
\end{abstract}

\section{Introduction}

Information geometry is the application of ideas from differential
geometry to the field of statistics. Rao \cite{rao1945information}
was the first to observe that the Fisher information matrix forms
a Riemannian metric for certain models, inducing on them the structure
of a Riemannian manifold. Amari \cite{shun2012differential,amari1987differential,amari2007methods,amari2001information}
extended his observations considerably, discussing models which additionally
possess so-called flat dualistic affine connections. Perhaps the culmination
of his work is a novel proof of the Expectation-Maximization Theorem
within the geometric context. Another famous result from information
geometry is Cencov's theorem \cite[Theorem~11.1]{cencov2000statistical},
which characterizes all Riemmanian metrics and affine connections
which are invariant to sufficient statistics for the family of models
of all nonzero densities on finite outcome spaces. Other notable authors
who have studied information geometry are Murray and Rice \cite{murray1993differential};
Le, Ay, Jost, Schwachhofer \cite{ay2015information}; and Bauer, Bruveris,
and Michor \cite{bauer2016uniqueness}; and there have been a host
of others claiming novel applications, although these claims usually
are without proof, or are just re-proofs of facts from statistics
that were already well-known.

Another notable author who has studied information geometry is Smith
\cite{smith2005covariance}. Smith developed, using the structure of
an affine connection on a statistical model, a family of Cramer-Rao-type
lower bounds on expected squared geodesic distance for fairly general
models. Smith focused in particular on the model of a multivariate zero-mean
Gaussian. Identifying zero-mean Gaussians with their covariances naturally gives
this model the structure of the cone of positive-definite symmetric
(or Hermitian) matrices. For this model, Smith derives a Cramer-Rao-type
lower bound on the expected squared Fisher-Rao distance---the infimum of the lengths of all smooth paths between two points, where differential length corresponds to the Fisher information Riemannian metric.
Unlike the usual Cramer-Rao bound, this bound is independent of the
true underlying parameter, making it potentially more useful since
this parameter is in fact unknown. The question is why one should
care about the expected squared Fisher-Rao distance, or about Fisher-Rao distance at all? This debate usually boils down to claims about
``naturality'' of the distance, but usually has little substance.
In this article we do not engage in this discussion, but rather seek
to prove explicitly a closed-form expression for Fisher-Rao distance sketched by 
Smith, leaving the application of this distance to future work.
More precisely, we seek to prove the following Theorem, which corresponds
to the case of real scalars. (Smith's result concerns the complex case, and that case is similar.)
\begin{thm}\label{fisher-rao_dist}
  The Fisher-Rao distance between two covariance matrices $\mbf{R}$
  and $\mbf{S}\in\mathbb{R}^{p\times p}$ is given by
  \[
  d(\mbf{R},\mbf{S})^{2}=\frac{1}{2}\mathrm{tr}\left[\left(\log\mbf{R}^{-1/2}\mbf{S}\mbf{R}^{-1/2}\right)^{2}\right].
  \]

\end{thm}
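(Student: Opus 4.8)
The plan is to make the Fisher information metric explicit, recognize it as a homogeneous metric under a transitive isometric group action, reduce the computation of $d(\mathbf{R},\mathbf{S})$ to a single geodesic through the identity, compute that geodesic's length, and finally address minimality. First I would compute the metric. Writing the zero-mean Gaussian density $p(x;\Sigma)=(2\pi)^{-p/2}(\det\Sigma)^{-1/2}\exp(-\tfrac12 x^{T}\Sigma^{-1}x)$ on the cone of symmetric positive-definite $\Sigma$, whose tangent space at each point is the space of symmetric matrices, I differentiate the log-likelihood in a symmetric direction $A$ using $\partial_{A}\log\det\Sigma=\tr(\Sigma^{-1}A)$ and $\partial_{A}\Sigma^{-1}=-\Sigma^{-1}A\Sigma^{-1}$. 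The score is $s_{A}=\tfrac12\big(x^{T}\Sigma^{-1}A\Sigma^{-1}x-\tr(\Sigma^{-1}A)\big)$, which has mean zero. Using the Gaussian fourth-moment (Isserlis/Wick) identity $\mathrm{Cov}(x^{T}Mx,x^{T}Nx)=2\tr(M\Sigma N\Sigma)$, a short calculation gives the Fisher-Rao metric
\[
g_{\Sigma}(A,B)=\tfrac12\tr\big(\Sigma^{-1}A\Sigma^{-1}B\big),
\]
which is, up to the factor $\tfrac12$, the standard affine-invariant metric on the positive-definite cone.

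Next I would exploit symmetry. The group $GL_{p}(\mathbb{R})$ acts on the cone by congruence $\Sigma\mapsto G\Sigma G^{T}$, with a tangent vector $A$ pushed forward to $GAG^{T}$; substituting into $g$ and using cyclicity of the trace shows each such map is an isometry, and the action is transitive (take $G=\Sigma^{1/2}$ to reach $\Sigma$ from $I$). Choosing $G=\mathbf{R}^{-1/2}$ sends $\mathbf{R}\mapsto I$ and $\mathbf{S}\mapsto\mathbf{P}:=\mathbf{R}^{-1/2}\mathbf{S}\mathbf{R}^{-1/2}$, so $d(\mathbf{R},\mathbf{S})=d(I,\mathbf{P})$. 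If convenient one may further diagonalize $\mathbf{P}=ODO^{T}$ by an orthogonal $O$, which fixes $I$, reducing to a diagonal endpoint.

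Then I would determine the geodesics. Computing the Christoffel symbols of $g$ yields the quadratic form $\Gamma_{\Sigma}(A,A)=-A\Sigma^{-1}A$, hence the geodesic equation $\ddot{\Sigma}=\dot{\Sigma}\Sigma^{-1}\dot{\Sigma}$. Taking $A=\log\mathbf{P}$, which is symmetric since $\mathbf{P}$ is positive-definite, the curve $\gamma(t)=\exp(tA)$ satisfies $\gamma(0)=I$ and $\gamma(1)=\mathbf{P}$, and $\ddot{\gamma}-\dot{\gamma}\gamma^{-1}\dot{\gamma}=A^{2}e^{tA}-A^{2}e^{tA}=0$, so it is a geodesic. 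Because $\gamma^{-1}\dot{\gamma}=A$ is constant in $t$, the speed $g_{\gamma}(\dot{\gamma},\dot{\gamma})=\tfrac12\tr(A^{2})$ is constant, so the length of $\gamma$ is $\sqrt{\tfrac12\tr((\log\mathbf{P})^{2})}$. Squaring and substituting $\mathbf{P}=\mathbf{R}^{-1/2}\mathbf{S}\mathbf{R}^{-1/2}$ reproduces the claimed expression.

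The main obstacle is minimality: the previous step exhibits one curve of the asserted length but does not show it realizes the infimum defining $d$. To close this gap I would show that the cone with $g$ is a Cartan-Hadamard manifold---complete, simply connected, and of nonpositive sectional curvature---so that the exponential map at $I$ is a diffeomorphism and geodesics emanating from $I$ are globally length-minimizing. Simple connectivity and completeness are immediate since the cone is convex, so the crux is the curvature computation $K\le 0$, which for this symmetric space reduces to a bracket/trace inequality on symmetric matrices. Alternatively, I could bypass curvature with a direct lower bound: for any competing path $\Sigma(t)$ from $I$ to $\mathbf{P}$, show its $g$-length dominates $\sqrt{\tfrac12\tr((\log\mathbf{P})^{2})}$, most cleanly after reducing to diagonal $\mathbf{P}$ and bounding the length of the induced path in the logarithms of the eigenvalues. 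I expect this minimality argument to be the delicate part of the proof.
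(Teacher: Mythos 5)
Your proposal is correct and, at its core, follows the same skeleton as the paper: compute the Fisher metric $g_{\Sigma}(\mbf{A},\mbf{B})=\tfrac12\tr(\Sigma^{-1}\mbf{A}\Sigma^{-1}\mbf{B})$, identify the Christoffel symbols and the geodesic equation $\ddot{\gamma}=\dot{\gamma}\gamma^{-1}\dot{\gamma}$, exhibit the matrix-exponential curve as a constant-speed geodesic joining the two endpoints, integrate to get the length, and invoke nonpositive sectional curvature for minimality. Within that skeleton you make two genuinely different choices. First, you compute the metric as the covariance of the score using the Isserlis/Wick identity $\mathrm{Cov}(x^{\top}\mbf{M}x,x^{\top}\mbf{N}x)=2\tr(\mbf{M}\Sigma\mbf{N}\Sigma)$, whereas the paper uses the negative-Hessian form of Fisher information; both are valid, and yours trades a regularity assumption (interchanging derivative and integral twice) for a fourth-moment identity. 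Second, and more substantively, you exploit the congruence action $\Sigma\mapsto\mbf{G}\Sigma\mbf{G}^{\top}$ of $GL_{p}(\mathbb{R})$ as a transitive isometry group to reduce everything to a geodesic through the identity with endpoint $\mbf{P}=\mbf{R}^{-1/2}\mbf{S}\mbf{R}^{-1/2}$; the paper instead writes down the geodesic $\mbf{R}^{1/2}\exp(t\mbf{R}^{-1/2}\mbf{D}\mbf{R}^{-1/2})\mbf{R}^{1/2}$ through $\mbf{R}$ directly (which is exactly the congruence image of your $e^{t\mbf{A}}$) and solves for $\mbf{D}$. Your reduction is cleaner conceptually and makes the final trace computation trivial, but note that in a full write-up you would still owe the verification that $\bm{\Gamma}_{\Sigma}(\mbf{A},\mbf{B})=-\tfrac12(\mbf{A}\Sigma^{-1}\mbf{B}+\mbf{B}\Sigma^{-1}\mbf{A})$ really is the Levi-Civita Christoffel symbol (metric compatibility and symmetry), which the paper checks explicitly and you assert.

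One caveat on your minimality discussion, which you rightly flag as the delicate point (the paper simply cites nonpositive curvature and the normal-neighborhood property as well-known): your claim that completeness is ``immediate since the cone is convex'' is not right. Convexity gives contractibility, hence simple connectivity, but says nothing about completeness of the Fisher metric --- indeed the cone is convex yet \emph{incomplete} in the flat Euclidean metric, since one can exit through the boundary in finite length. Completeness of $g$ requires its own argument: for instance, the geodesics $t\mapsto \mbf{G}e^{t\mbf{A}}\mbf{G}^{\top}$ are defined for all $t\in\mathbb{R}$ and, by homogeneity, realize every initial condition, so the metric is geodesically complete and Hopf--Rinow applies. With that repair, your Cartan--Hadamard route closes the gap that the paper leaves to citation.
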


\section{Proof of the main theorem}

Let $\mathcal{P}$ denote the set of $p \times p$ covariance matrices and $M$ be the model of $p$-dimensional real Gaussians with mean 0: 
\[
M=\left\{p(\mbf{x};\mbf{R})=\frac{1}{(2\pi)^{p/2}|\mbf{R}|^{1/2}} \exp\left(-\frac{1}{2} \mbf{x}^\top \mbf{R}^{-1}\mbf{x}\right): \, \mbf{R}\in \mathcal{P} \right\}. 
\]
$M$ and $\mathcal{P}$ are naturally identified.  We give $\mathcal{P}$ the smooth structure of $\mathbb{R}^{p^2}$, and since it embeds smoothly into $\mathbb{R}^{p^2}$, tangent vectors are precisely derivatives of smooth paths through $\mathcal{P}$.  This means that tangent vectors are precisely $p\times p$ symmetric matrices.  To see this, suppose $\mbf{D}$ is a symmetric $p\times p$ matrix and let $\gamma: (-\epsilon, \epsilon) \to \mathcal{P}$ be the path $\mbf{R}+t\mbf{D}$.   The derivative of this path at zero is $\mbf{D}$.  Conversely, suppose $\gamma$ is a smooth path through $\mathcal{P}$ originating from $\mbf{R}$.  Then the difference quotient
\[
 \frac{\gamma(t)-\gamma(0)}{t}
\]
is always symmetric; thus, so is its limit.

The proof of the main theorem will rely on the following lemmas:
\begin{lem}
  The Fisher information metric $g$ for the model $M$ at the point $\mbf{R}$ is given by
  \[
  g_{\mbf{R}}(\mbf{A},\mbf{B}) = \frac{1}{2}\mathrm{tr}\mbf{R}^{-1}\mbf{A}\mbf{R}^{-1}\mbf{B},
  \] 
    where $\mbf{A}, \mbf{B}$ are real symmetric matrices.

\end{lem}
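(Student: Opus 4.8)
The plan is to compute the Fisher information metric directly from its definition.

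The Fisher information metric on a statistical model is the expected value of the outer product of score functions; for a tangent vector realized as a curve direction, the metric in coordinates is $g_{ij} = \mathbb{E}\left[\frac{\partial \log p}{\partial \theta^i}\frac{\partial \log p}{\partial \theta^j}\right]$, which (under standard regularity) also equals $-\mathbb{E}\left[\frac{\partial^2 \log p}{\partial \theta^i \partial \theta^j}\right]$. Let me describe the steps.

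First I would write the log-likelihood $\ell(\mbf{x};\mbf{R}) = \log p(\mbf{x};\mbf{R}) = -\frac{p}{2}\log(2\pi) - \frac{1}{2}\log|\mbf{R}| - \frac{1}{2}\mbf{x}^\top \mbf{R}^{-1}\mbf{x}$. Since tangent vectors at $\mbf{R}$ are symmetric matrices $\mbf{A}, \mbf{B}$ (as established in the preceding discussion), I would compute the directional derivative of $\ell$ in the direction $\mbf{A}$, i.e. $D_{\mbf{A}}\ell := \frac{d}{dt}\big|_{t=0}\ell(\mbf{x};\mbf{R}+t\mbf{A})$, treating the score as a linear functional of the tangent direction. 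This requires the two matrix-calculus identities $\frac{d}{dt}\log|\mbf{R}+t\mbf{A}| = \mathrm{tr}(\mbf{R}^{-1}\mbf{A})$ and $\frac{d}{dt}(\mbf{R}+t\mbf{A})^{-1} = -\mbf{R}^{-1}\mbf{A}\mbf{R}^{-1}$ at $t=0$. Combining these gives the directional score $D_{\mbf{A}}\ell = -\frac{1}{2}\mathrm{tr}(\mbf{R}^{-1}\mbf{A}) + \frac{1}{2}\mbf{x}^\top \mbf{R}^{-1}\mbf{A}\mbf{R}^{-1}\mbf{x}$, which I would rewrite as $\frac{1}{2}\mathrm{tr}\left[\mbf{R}^{-1}\mbf{A}\mbf{R}^{-1}(\mbf{x}\mbf{x}^\top - \mbf{R})\right]$ using $\mbf{x}^\top \mbf{M}\mbf{x} = \mathrm{tr}(\mbf{M}\mbf{x}\mbf{x}^\top)$.

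Next I would form the product of the two directional scores $D_{\mbf{A}}\ell \cdot D_{\mbf{B}}\ell$ and take its expectation under $p(\mbf{x};\mbf{R})$. Writing $\mbf{C} = \mbf{x}\mbf{x}^\top - \mbf{R}$, the metric becomes $g_{\mbf{R}}(\mbf{A},\mbf{B}) = \frac{1}{4}\mathbb{E}\left[\mathrm{tr}(\mbf{R}^{-1}\mbf{A}\mbf{R}^{-1}\mbf{C})\,\mathrm{tr}(\mbf{R}^{-1}\mbf{B}\mbf{R}^{-1}\mbf{C})\right]$, a contraction of the fourth-order moment tensor of the centered quantity $\mbf{C}$. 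The main obstacle is this fourth-moment computation: I must evaluate $\mathbb{E}[C_{ij}C_{kl}]$ where $C_{ij} = x_i x_j - R_{ij}$, which by Isserlis' (Wick's) theorem for centered Gaussians gives $\mathbb{E}[x_ix_jx_kx_l] = R_{ij}R_{kl} + R_{ik}R_{jl} + R_{il}R_{jk}$, so that $\mathbb{E}[C_{ij}C_{kl}] = R_{ik}R_{jl} + R_{il}R_{jk}$. I would then substitute this covariance structure into the double-trace expression and carry out the index contractions.

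The final step is to verify that these contractions collapse to the claimed form. Setting $\mbf{U} = \mbf{R}^{-1}\mbf{A}\mbf{R}^{-1}$ and $\mbf{V} = \mbf{R}^{-1}\mbf{B}\mbf{R}^{-1}$, both symmetric, the two terms $R_{ik}R_{jl}$ and $R_{il}R_{jk}$ each contribute $\mathrm{tr}(\mbf{U}\mbf{R}\mbf{V}\mbf{R}) = \mathrm{tr}(\mbf{R}^{-1}\mbf{A}\mbf{R}^{-1}\mbf{B})$ after simplification, so their sum over the factor $\frac{1}{4}$ yields $\frac{1}{2}\mathrm{tr}(\mbf{R}^{-1}\mbf{A}\mbf{R}^{-1}\mbf{B})$, matching the lemma. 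As an alternative that avoids the fourth moment entirely, I could instead use the second-derivative (Hessian) form $g_{\mbf{R}}(\mbf{A},\mbf{A}) = -\mathbb{E}[D_{\mbf{A}}^2\ell]$, since differentiating the score once more and taking the expectation replaces $\mbf{x}\mbf{x}^\top$ by its mean $\mbf{R}$; I expect this route to be computationally lighter and would likely present it as the primary argument, recovering the symmetric bilinear form by polarization.
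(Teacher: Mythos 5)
Your proposal is correct, and it actually contains the paper's proof as its closing ``alternative'': the paper's argument is precisely the Hessian route you sketch at the end --- differentiate $\log p$ twice in the direction $\mbf{D}$, observe that integrating $\mbf{x}\mbf{x}^\top$ against the Gaussian density replaces it by $\mbf{R}$ so no fourth moments ever appear, negate, and recover the bilinear form by polarization. Your detailed primary route (score--covariance definition plus Isserlis' theorem) is genuinely different and also sound: writing $\mbf{U}=\mbf{R}^{-1}\mbf{A}\mbf{R}^{-1}$, $\mbf{V}=\mbf{R}^{-1}\mbf{B}\mbf{R}^{-1}$, the contraction
\[
\textstyle\sum_{ijkl} U_{ij}V_{kl}\left(R_{ik}R_{jl}+R_{il}R_{jk}\right) = 2\,\tr\left(\mbf{U}\mbf{R}\mbf{V}\mbf{R}\right) = 2\,\tr\left(\mbf{R}^{-1}\mbf{A}\mbf{R}^{-1}\mbf{B}\right)
\]
does hold, using the symmetry of $\mbf{U}$, $\mbf{V}$, and $\mbf{R}$, so the prefactor $\tfrac{1}{4}$ yields the claimed $\tfrac{1}{2}$. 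The trade-off between the two is worth noting: the Hessian form is computationally lighter but rests on the regularity conditions equating $\mathbb{E}\left[D_{\mbf{A}}\ell\, D_{\mbf{B}}\ell\right]$ with $-\mathbb{E}\left[D_{\mbf{A}}D_{\mbf{B}}\ell\right]$, which the paper simply asserts ``hold here for the Gaussian model''; your Isserlis computation works directly from the defining expression for Fisher information and so avoids that appeal entirely, at the price of the fourth-moment bookkeeping. If you present the Hessian version as primary, as you indicate you would, your proof is essentially the paper's; if you present the Wick-calculation version, you have an independent and slightly more self-contained argument.
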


\begin{proof}
Given a parametric model $\{p(\cdot;\theta)d\mu:\, \theta \in \Theta \subset \mathbb{R}^N\}$ on a sample space $\mathcal{X}$, for some connected open set $\Theta$, the Fisher information matrix is
\[
(g_\theta)_{ij} =\int_{\mathcal{X}} \frac{\partial}{\partial \theta^i} \log p(x; \theta) \frac{\partial}{\partial \theta^j} \log p(x; \theta)\,  p(x;\theta) d\mu(x), 
\]
when defined. Under suitable regularity conditions, which will hold here for the Gaussian model, this can be reexpressed as
\[
(g_\theta)_{ij} =-\int_{\mathcal{X}} \frac{\partial^2}{\partial \theta^i\partial\theta^j} \log p(x; \theta)\,  p(x;\theta) d\mu(x), 
\]
This extends by bilinarity to a 2-tensor
\begin{equation} \label{eq:quadratic-fisher-info}
g_\theta (v, w) = -\int_{\mathcal{X}} vw(\log p(x;\cdot))\, p(x; \theta)d\mu(x),
\end{equation}
where $v$ and $w$ are tangent vectors at $\theta$.

For the covariance model, let us consider $g_{\mbf{R}}(\mbf{D},\mbf{D})$, where $\mbf{D}$ is a tangent vector at $\mbf{R}$.  We abuse notation and use  the field of symmetric matrices $\mbf{D}$ interchangeably with the derivation that corresponds to it.

We have
\[
\mbf{D}(\log p(\mbf{x};\cdot))=\left.\frac{d}{dt} \log p(\mbf{x}; \mbf{R}+t\mbf{D})\right|_{t=0}.
\]
On the right side, we have 
\begin{align*}
 \log p(\mbf{x}; \mbf{R}+t\mbf{D}) = \log\frac{1}{(2\pi)^{p/2}} - \frac{1}{2} \tr \log \left( \mbf{R}+t\mbf{D}\right) -\frac{1}{2}\mbf{x}^\top \left(\mbf{R} +t\mbf{D} \right)^{-1} \mbf{x} .
\end{align*}
Taking the derivative for $t$ and setting $t=0$ gives
\[
\mbf{D}(\log p(\mbf{x}; \cdot)) = -\frac{1}{2}\tr \mbf{R}^{-1}\mbf{D} + \frac{1}{2}\mbf{x}^\top \mbf{R}^{-1}\mbf{D}\mbf{R}^{-1} \mbf{x}.
\]
Applying $\mbf{D}$ again yields
\begin{equation} \label{eq:d-squared}
\mbf{D^2}(\log p(x;\cdot)) = \frac{1}{2} \tr \mbf{R}^{-1}\mbf{D}\mbf{R}^{-1}\mbf{D} -  \mbf{x}^\top \mbf{R}^{-1}\mbf{D}\mbf{R}^{-1}\mbf{D}\mbf{R}^{-1}\mbf{x}.
\end{equation}
The latter term can be written
\[
\tr \left( \mbf{R}^{-1}\mbf{D}\mbf{R}^{-1}\mbf{D}\mbf{R}^{-1}\mbf{x}\mbf{x}^\top \right),
\]
so integrating this term against $p(x;\mbf{R})$ yields 
\[
\tr\left( \mbf{R}^{-1}\mbf{D}\mbf{R}^{-1}\mbf{D}\mbf{R}^{-1}\mbf{R} \right) = \tr \left( (\mbf{R}^{-1}\mbf{D})^2\right).
\]
Adding in the first term of \eqref{eq:d-squared} and applying the negative sign in \eqref{eq:quadratic-fisher-info} gives
\[
g_{\mbf{R}}(\mbf{D}, \mbf{D}) = \frac{1}{2} \tr \left((\mbf{R}^{-1}\mbf{D})^2\right).
\]
The result follows from polarization.
%

\end{proof}

Although it is assumed that the reader is at least somewhat familiar with Riemannian geometry, we recall a few definitions and set the tone notationally. Using $\mathfrak{X}(M)$ to denote the smooth vector fields on a smooth manifold $M$, an {\it affine connection} is a map
\begin{align*}
  \nabla: \mathfrak{X}(M) \times \mathfrak{X}(M) &\rightarrow \mathfrak{X}(M) \\
  (X,Y) &\mapsto \nabla_X Y
\end{align*}
that is $C^{\infty}(M)$-linear in the first coordinate, $\mathbb{R}$-linear in the second coordinate, and for all $f \in C^{\infty}(M)$ satisfies
\[
\nabla_X (fY) = f \nabla_X Y + (Xf) Y.
\]
Given a Riemannian metric $g$, the {\it Levi-Civita connection} is an affine connection $\nabla$ that also satisfies the following for all $X,Y,Z \in \mathfrak{X}(M)$:
\begin{enumerate}
\item $X g(Y,Z) = g(\nabla_X Y, Z) + g(Y, \nabla_X Z)$, and
\item $\nabla_X Y - \nabla_Y X = XY - YX$.
\end{enumerate}
As is well-known, the Levi-Civita connection for a given metric is the unique connection with these properties. (This is sometimes called the {\it Fundamental Theorem of Riemannian Geometry}.)

Given a local frame $(\partial_i)$ for the tangent bundle $TM$, we have that the connection coefficients satisfy
\[
\nabla_{\partial_i} \partial_j = \Gamma_{ij}^{k} \partial_k
\]
and these coefficients $\Gamma_{ij}^{k}$ are called {\it Christoffel symbols}. A path $\gamma(t)=(\gamma_{i}(t))$ in $M$ is called a {\it geodesic} if it satisfies the {\it geodesic equation}
\begin{align*}
\ddot{\gamma}_k(t) + \Gamma_{ij}^{k}\dot{\gamma}_i(t)\dot{\gamma}_j(t).
\end{align*}
Here it is understood that $\Gamma_{ij}^k$ is in $C^\infty(M)$ and that the above equation holds at $\bm{\gamma}(t)$.   It is sometimes convenient to express the quadratic terms above in vector notation:
\[
\mathbf{\Gamma}_{\bm{\gamma}(t)}\left(\dot{\bm{\gamma}}(t), \dot{\bm{\gamma}}(t)\right).
\]

Given $p\in M$ and an affine connection $\nabla$, we say that an open set $U$ containing $p$ is a \emph{normal neighborhood} of $p$ iff for all $q\in U$, the solution to the geodesic equation with boundary conditions $p$ and $q$ is unique.  It is well known 
that if $\nabla$ is the Levi-Civita connection corresponding to a Riemannian metric $g$, the geodesics within a normal neighborhood are length-minimizing.  It is also well-known that the covariance cone with the Fisher information metric has non-positive sectional curvature, making the whole cone a normal neighborhood of every point.  Thus, Fisher-Rao distance, which we have defined as an infimum of path distances, is achieved by the geodesics corresponding to the Levi-Civita connection, and we need only find the lengths of these geodesics.

We note that it is well-known that if $\nabla$ is an affine connection, $\bm{\Gamma}$ is the corresponding Christoffel symbol, and $X,Y\in \mathfrak{X}(M)$, then
\begin{equation}\label{eq:christoffel-connection}
\nabla_X Y = X Y + \bm{\Gamma}(X,Y).
\end{equation}

We are almost ready to prove our theorem, but first we need a couple of lemmas.

\begin{lem} \label{lem:deriv-of-Rinv}
If $\mbf{X}\in\mathfrak{X}(M)$ and $f(\mbf{R})=\mbf{R}^{-1}$, then $(\mbf{X}f)(\mbf{R})=-\mbf{R}^{-1}\mbf{X}\mbf{R}^{-1}$, where $\mbf{X}$ on the right side is considered as a field of symmetric matrices.
\end{lem}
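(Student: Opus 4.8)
The plan is to reduce the statement to the familiar formula for the derivative of a matrix inverse. Since the paper has already established that tangent vectors at $\mbf{R}$ are precisely symmetric matrices, and that each such matrix $\mbf{X}$ is realized as the velocity at $t=0$ of the straight-line path $t \mapsto \mbf{R} + t\mbf{X}$, I would begin by interpreting the derivation $\mbf{X}$ acting on the matrix-valued function $f$ entry by entry. By $\mathbb{R}$-linearity and the Leibniz rule, applying $\mbf{X}$ to each entry of $f(\mbf{R}) = \mbf{R}^{-1}$ amounts to computing the directional derivative
\[
(\mbf{X}f)(\mbf{R}) = \left.\frac{d}{dt}\right|_{t=0} (\mbf{R}+t\mbf{X})^{-1}.
\]
Here I should note that the path stays inside $\mathcal{P}$ for small $t$, because $\mathcal{P}$ is open in the space of symmetric matrices, so that the inverse is well-defined and smooth along it.

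Next I would differentiate the defining identity $(\mbf{R}+t\mbf{X})^{-1}(\mbf{R}+t\mbf{X}) = \mbf{I}$ in $t$ and apply the product rule, giving
\[
\left(\frac{d}{dt}(\mbf{R}+t\mbf{X})^{-1}\right)(\mbf{R}+t\mbf{X}) + (\mbf{R}+t\mbf{X})^{-1}\mbf{X} = 0.
\]
Evaluating at $t=0$ and solving for the derivative yields $\left.\frac{d}{dt}\right|_{t=0}(\mbf{R}+t\mbf{X})^{-1} = -\mbf{R}^{-1}\mbf{X}\mbf{R}^{-1}$, which is exactly the claimed formula.

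There is no real obstacle here; the computation is routine once the identifications are in place. The only point deserving care is the translation between the two roles that $\mbf{X}$ plays---as an abstract derivation on $C^\infty(M)$ on the left-hand side, and as a concrete symmetric matrix on the right-hand side---together with the verification that applying a derivation componentwise to a matrix-valued map is legitimate and agrees with the straight-line directional derivative. Once that bookkeeping is settled, the product-rule computation closes the argument.
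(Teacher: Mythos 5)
Your proof is correct and takes essentially the same approach as the paper: both reduce the lemma to the directional derivative of matrix inversion along the straight-line path $t \mapsto \mbf{R}+t\mbf{X}$ at a fixed base point. The only difference is cosmetic---the paper evaluates the difference quotient $\frac{1}{h}\left((a+hb)^{-1}-a^{-1}\right)$ algebraically and takes the limit (which establishes differentiability en route), whereas you differentiate the identity $(\mbf{R}+t\mbf{X})^{-1}(\mbf{R}+t\mbf{X})=\mbf{I}$ by the product rule, relying on the (standard, and correctly noted) smoothness of matrix inversion on the open set $\mathcal{P}$.
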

\begin{proof}
Let $a$ be an invertible square matrix, $b$ be a square matrix of the same size, and let us compute the difference quotient for the derivative of $(a+hb)^{-1}$ at $h=0$:
\begin{align*}
\frac{1}{h}\left( (a+hb)^{-1}-a^{-1} \right) & = \frac{1}{h}(a+hb)^{-1}\left(1-(a+hb)a^{-1}\right) \\
& = \frac{1}{h} -(a+hb)^{-1}hba^{-1}
\end{align*}
Taking the limit as $h\to 0$, we get $-a^{-1}ba^{-1}$.

The proof is completed by fixing a base point $p\in M$ and taking $a=\mbf{R}_p$ and $b = \mbf{X}_p$.
\end{proof}

\begin{lem}
  The map $\bm{\gamma}:(-\varepsilon,\varepsilon) \rightarrow \mathcal{P}$ given by
  \begin{align}\label{covariance_geodesic}
    \bm{\gamma}(t) = \mbf{R}^{1/2}\exp\!\left(t\mbf{R}^{-1/2}\mbf{D}\mbf{R}^{-1/2}\right)\mbf{R}^{1/2}
  \end{align}
  is a geodesic emanating from $\mbf{R}$ in the direction of $\mbf{D}$. (Here $\exp$ is the usual matrix exponential.)
\end{lem}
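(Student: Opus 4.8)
The plan is to verify the two defining features of a geodesic: that $\bm{\gamma}$ has the correct initial position and velocity, and that it satisfies the geodesic equation for the Levi-Civita connection of $g$. The initial conditions are immediate. Since $\exp(0)=I$ we get $\bm{\gamma}(0)=\mbf{R}^{1/2}\mbf{R}^{1/2}=\mbf{R}$, and differentiating \eqref{covariance_geodesic} at $t=0$ (the outer factors $\mbf{R}^{1/2}$ are constant) gives $\dot{\bm{\gamma}}(0)=\mbf{R}^{1/2}\mbf{R}^{-1/2}\mbf{D}\mbf{R}^{-1/2}\mbf{R}^{1/2}=\mbf{D}$, so the curve leaves $\mbf{R}$ in the direction $\mbf{D}$ as claimed.

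The real work is to identify the Christoffel symbol of $g$ and then check the geodesic equation. I would first propose the candidate
\[
\bm{\Gamma}_{\mbf{R}}(\mbf{A},\mbf{B}) = -\frac{1}{2}\left( \mbf{A}\mbf{R}^{-1}\mbf{B} + \mbf{B}\mbf{R}^{-1}\mbf{A} \right),
\]
define a connection $\nabla$ through \eqref{eq:christoffel-connection}, and confirm it is the Levi-Civita connection via the uniqueness half of the Fundamental Theorem. Two properties must be checked. Torsion-freeness is immediate, because $\bm{\Gamma}_{\mbf{R}}$ is manifestly symmetric in $\mbf{A}$ and $\mbf{B}$, so $\bm{\Gamma}(X,Y)=\bm{\Gamma}(Y,X)$ and hence $\nabla_X Y - \nabla_Y X = XY - YX$. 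Metric compatibility, $X g(Y,Z) = g(\nabla_X Y, Z) + g(Y, \nabla_X Z)$, is the one substantive computation: expanding $g(Y,Z)=\frac{1}{2}\tr(\mbf{R}^{-1}Y\mbf{R}^{-1}Z)$ and differentiating along $X$, each of the two factors $\mbf{R}^{-1}$ contributes a term $-\mbf{R}^{-1}X\mbf{R}^{-1}$ by Lemma \ref{lem:deriv-of-Rinv}, while the vector fields $Y,Z$ contribute the flat-derivative terms $XY$ and $XZ$. On the right-hand side the flat-derivative terms reproduce $XY$ and $XZ$ exactly, and the terms coming from $\bm{\Gamma}$ reorganize, via cyclicity of the trace, into precisely the two $\mbf{R}^{-1}X\mbf{R}^{-1}$ contributions. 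I expect this trace bookkeeping---tracking which factor each derivative hits and collapsing the cyclic permutations---to be the main obstacle, though it is routine once set up.

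With $\bm{\Gamma}$ in hand, the final step is a telescoping calculation. Writing $\mbf{A}=\mbf{R}^{-1/2}\mbf{D}\mbf{R}^{-1/2}$, I would compute $\dot{\bm{\gamma}}(t)=\mbf{R}^{1/2}\mbf{A}\exp(t\mbf{A})\mbf{R}^{1/2}$ and $\ddot{\bm{\gamma}}(t)=\mbf{R}^{1/2}\mbf{A}^2\exp(t\mbf{A})\mbf{R}^{1/2}$, and note $\bm{\gamma}(t)^{-1}=\mbf{R}^{-1/2}\exp(-t\mbf{A})\mbf{R}^{-1/2}$. Since the candidate gives $\bm{\Gamma}_{\bm{\gamma}}(\dot{\bm{\gamma}},\dot{\bm{\gamma}}) = -\dot{\bm{\gamma}}\,\bm{\gamma}^{-1}\dot{\bm{\gamma}}$, substituting and cancelling the adjacent $\mbf{R}^{\pm 1/2}$ and $\exp(\pm t\mbf{A})$ factors yields $\dot{\bm{\gamma}}\,\bm{\gamma}^{-1}\dot{\bm{\gamma}} = \mbf{R}^{1/2}\mbf{A}^2\exp(t\mbf{A})\mbf{R}^{1/2} = \ddot{\bm{\gamma}}$. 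Hence $\ddot{\bm{\gamma}}(t) + \bm{\Gamma}_{\bm{\gamma}(t)}(\dot{\bm{\gamma}}(t),\dot{\bm{\gamma}}(t)) = 0$, which is the geodesic equation, and the lemma follows. As an optional simplification of this last step, one can first observe that $\mbf{R}\mapsto \mbf{G}\mbf{R}\mbf{G}^\top$ is an isometry of $g$ for every invertible $\mbf{G}$ (a one-line trace computation); taking $\mbf{G}=\mbf{R}^{-1/2}$ carries $\bm{\gamma}$ to $t\mapsto\exp(t\mbf{A})$ and reduces the verification to a geodesic through the identity, shortening the algebra.
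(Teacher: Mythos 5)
Your proposal is correct and follows essentially the same route as the paper: the same candidate Christoffel symbol $\bm{\Gamma}_{\mbf{R}}(\mbf{A},\mbf{B}) = -\tfrac{1}{2}(\mbf{A}\mbf{R}^{-1}\mbf{B}+\mbf{B}\mbf{R}^{-1}\mbf{A})$, verified to give the Levi-Civita connection by checking symmetry and metric compatibility (using Lemma~\ref{lem:deriv-of-Rinv} and cyclicity of the trace, with uniqueness from the Fundamental Theorem), followed by the identical algebraic verification that $\ddot{\bm{\gamma}} = \dot{\bm{\gamma}}\,\bm{\gamma}^{-1}\dot{\bm{\gamma}}$. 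Your explicit check of the initial conditions and the optional isometry reduction to a geodesic through the identity are small additions beyond what the paper records, but the core argument coincides.
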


\begin{proof}
  In this proof and what follows we will denote $\bm{\gamma}$ without boldface, and it will be understood that it is matrix-valued. 
  
  We first claim that the Christoffel symbols of the Levi-Civita connection are given by
  \begin{align*}
    \bm{\Gamma}_{\mbf{R}}(\mbf{A},\mbf{B}) &= -\frac{1}{2}\left(\mbf{A}\mbf{R}^{-1}\mbf{B} + \mbf{B}\mbf{R}^{-1}\mbf{A}\right).
  \end{align*}
  (This is stated without proof in \cite{smith2005covariance}.)
 To do this, we must prove that $\mbf{\Gamma}$ satisfies \eqref{eq:christoffel-connection} for some affine connection and verify properties 1 and 2 above.  Indeed, by defining $\nabla$ as in \eqref{eq:christoffel-connection} with our $\mbf{\Gamma}$ as above, it is straightforward to show that $\nabla$ satisfies the properties of an affine connection. Property 2 follows from the definition of $\nabla$ and the fact that $\bm{\Gamma}_{\mbf{R}}(\mbf{A},\mbf{B}) - \bm{\Gamma}_{\mbf{R}}(\mbf{B},\mbf{A}) = 0$. For property 1, let $\mbf{X},\mbf{Y},\mbf{Z} \in \mathfrak{X}(\mathcal{P})$. We then have
 \begin{align} 
	\mbf{X}g_{\mbf{R}}(\mbf{Y},\mbf{Z})
    &= -\frac{1}{2} \tr\left( \mbf{R}^{-1}\mbf{X}\mbf{R}^{-1}\mbf{Y}\mbf{R}^{-1}\mbf{Z} \right)
 +\frac{1}{2}\tr\left( \mbf{R}^{-1}\mbf{XYR}^{-1}\mbf{Z} \right) + \nonumber \\
 &\quad - \frac{1}{2} \tr\left( \mbf{R}^{-1}\mbf{Y}\mbf{R}^{-1}\mbf{X}\mbf{R}^{-1}\mbf{Z} \right) 
  +\frac{1}{2}\tr\left( \mbf{R}^{-1}\mbf{YR}^{-1}\mbf{XZ} \right) \label{eq:XonZ}
 \end{align}
 (Here we have used Lemma~\ref{lem:deriv-of-Rinv} and are again using derivations and vector fields interchangeably.)  On the other hand 
 \begin{align*}
 & g_{\mbf{R}}\left( \nabla_{\mbf{X}}\mbf{Y}, \mbf{Z}\right) \\ 
 & =
 g_{\mbf{R}}(\mbf{XY},\mbf{Z}) +  g_{\mbf{R}}(\bm{\Gamma}_{\mbf{R}}(\mbf{X},\mbf{Y}), \mbf{Z}) \\
 &= \frac{1}{2}\tr\left(\mbf{R}^{-1}\mbf{XY}\mbf{R}^{-1}\mbf{Z}\right) + \frac{1}{2} \tr\left( \mbf{R}^{-1}\left(-\frac{1}{2}\right)\left( \mbf{X}\mbf{R}^{-1}\mbf{Y} +\mbf{Y}\mbf{R}^{-1}\mbf{X}\right)\mbf{R}^{-1}\mbf{Z} \right)
 \end{align*}
 and
  \begin{align*}
& g_{\mbf{R}}\left( \mbf{Y},\nabla_{\mbf{X}} \mbf{Z}\right) \\ 
 & =
 g_{\mbf{R}}(\mbf{Y},\mbf{XZ}) +  g_{\mbf{R}}(\mbf{Y}, \mbf{\Gamma}_{\mbf{R}}(\mbf{X},\mbf{Z})) \\
 &= \frac{1}{2}\tr\left(\mbf{R}^{-1}\mbf{Y}\mbf{R}^{-1}\mbf{XZ}\right) + \frac{1}{2} \tr\left( \mbf{R}^{-1} \mbf{Y} \mbf{R}^{-1} \left(-\frac{1}{2}\right)\left( \mbf{X}\mbf{R}^{-1}\mbf{Z} +\mbf{Z}\mbf{R}^{-1}\mbf{X}\right) \right)
 \end{align*}
  Using the circulant property of trace, the sum of these two expressions is easily seen to equal  
\eqref{eq:XonZ}.
  
  We now wish to prove that $\gamma=\gamma(t)$ satisfies the geodesic equation
  \begin{align}\label{geodesic_eq}
    \ddot{\gamma} + \bm{\Gamma}_{\gamma}(\dot{\gamma},\dot{\gamma}) &= 0.
  \end{align}

  For simplicity, write $\mbf{E} = \exp\!\left(t\mbf{R}^{-1/2}\mbf{D}\mbf{R}^{-1/2}\right)$. Then we have that
  \[
  \frac{d\mbf{E}}{dt} = \mbf{R}^{-1/2}\mbf{D}\mbf{R}^{-1/2}\mbf{E} = \mbf{E}\mbf{R}^{-1/2}\mbf{D}\mbf{R}^{-1/2}
  \]
  whence
  \begin{align*}
    \gamma &= \mbf{R}^{1/2}\mbf{E}\mbf{R}^{1/2}, \\
    \dot{\gamma} &= \mbf{D}\mbf{R}^{-1/2}\mbf{E}\mbf{R}^{1/2} = \mbf{R}^{1/2}\mbf{E}\mbf{R}^{-1/2}\mbf{D}, \\
    \ddot{\gamma} &= \mbf{D}\mbf{R}^{-1/2}\mbf{E}\mbf{R}^{-1/2}\mbf{D}.
  \end{align*}
  It is then straightforward to compute
  \[
  \ddot{\gamma} + \bm{\Gamma}_{\gamma}(\dot{\gamma},\dot{\gamma}) = \ddot{\gamma} - \dot{\gamma} \gamma^{-1} \dot{\gamma} = 0.
  \]
\end{proof}

\begin{proof}[Proof of Theorem \ref{fisher-rao_dist}]
  Let $\gamma = \gamma(t)$ be a geodesic from $\mbf{R}$ to $\mbf{S}$. We may parameterize $\gamma$ so that $\gamma(0) = \mbf{R}$ and $\gamma(1) = \mbf{S}$. Since $\gamma$ takes the form of the map in Equation (\ref{covariance_geodesic}), we solve for $\mbf{D}$ in the equation $\mbf{S}=\gamma(1)$ to get
  \[
  \mbf{D} = \mbf{R}^{1/2} \mbf{L} \mbf{R}^{1/2}
  \]
  where
  \[
  \mbf{L} = \log\mbf{R}^{-1/2}\mbf{S}\mbf{R}^{-1/2}.
  \]
  We then have that
  \begin{align*}
    g_{\gamma}(\dot{\gamma},\dot{\gamma}) 
    &= \frac{1}{2}\tr\!\left[\left(\gamma^{-1}\dot{\gamma}\right)^2\right] \\
    &= \frac{1}{2}\tr\!\left[\left(\mbf{R}^{-1}\mbf{D}\right)^2\right] \\
    &= \frac{1}{2}\tr\!\left[\left(\mbf{R}^{-1/2}\mbf{L}\mbf{R}^{1/2}\right)^2\right] \\
    &= \frac{1}{2}\tr\!\left[\mbf{R}^{-1/2}\mbf{L}^{2}\mbf{R}^{1/2}\right] \\
    &= \frac{1}{2}\tr\!\left[\mbf{L}^{2}\right] \\
    &= \frac{1}{2}\tr\!\left[\left(\log\mbf{R}^{-1/2}\mbf{S}\mbf{R}^{-1/2}\right)^{2}\right] \\
  \end{align*}
  The distance between $\mbf{R}$ and $\mbf{S}$ is just the length of the geodesic segment between these two covariance matrices 
  \[
  d(\mbf{R},\mbf{S}) = \int_{0}^{1} \sqrt{g_{\gamma}(\dot{\gamma},\dot{\gamma})}\,dt = \sqrt{\frac{1}{2}\tr\!\left[\left(\log \mbf{R}^{-1/2}\mbf{S}\mbf{R}^{-1/2}\right)^{2}\right]}.
  \]
\end{proof}

\begin{rem}
Note that this distance is consistent (up to a factor of $1/\sqrt{2}$) with the distance for complex scalars derived in \cite{smith2005covariance} for complex scalars.  Smith's distance squared is
\[
\sum_j (\log \lambda_j )^2,
\]
where $\lambda_j$ are the generalized eigenvalues of the pencil $\mbf{S}-\lambda\mbf{R}$.  These may be defined as the roots of the following polynomial in $\lambda$: $\det(\mbf{S}-\lambda\mbf{R})$. But these roots are the same as those of $\det(\mbf{R}^{-1/2}\mbf{S}\mbf{R}^{-1/2} - \lambda 1)$; thus, the sum of the squares of the logs of these eigenvalues is precisely the trace appearing in the last theorem.
\end{rem}

\newpage

\bibliographystyle{plain}
\bibliography{information-geometry-bib}

\end{document}